\def\catchline#1#2#3#4#5{}
\def\received#1{\today\par}
\def\revised#1{}
\DeclareMathOperator{\SL}{SL}
\DeclareMathOperator{\GL}{GL}
\newcommand{\iso}{\cong}
\newcommand{\cover}{\widetilde}
\DeclareMathOperator{\rank}{rank}
\newcommand{\integer}{\mathbb{Z}}
\newcommand{\rational}{\mathbb{Q}}
\newcommand{\real}{\mathbb{R}}
\newcommand{\complex}{\mathbb{C}}
\newcommand{\GG}{\mathbf{G}}
\newcommand{\ZZ}{\mathbf{Z}}
\newcommand{\SSL}{\mathop{\mathbf{SL}}}
\newcommand{\bigtimes}{\mathop{\text{\Large$\times$}}}
\newtheorem{thm}[equation]{Theorem}
\newtheorem{CSP}[equation]{Theorem}
\newtheorem{cor}[equation]{Corollary}
\newtheorem{lem}[equation]{Lemma}
\theoremstyle{definition}
\newtheorem{rem}[equation]{Remark}
\newtheorem{rems}[equation]{Remarks}
\newtheorem{notation}[equation]{Notation}
\newtheorem*{ack}{Acknowledgments}
\def\@citestyle{\m@th\upshape\mdseries}
\let\citeform\@firstofone
\def\@cite#1#2{{%
  \@citestyle[\citeform{#1}\if@tempswa, #2\fi]}}
\newcommand{\fullcref}[2]{\cref{#1}(\ref{#1-#2})}
\begin{document}

\markboth{Dave Witte Morris and Robert J.\ Zimmer}{Volume-preserving actions of simple algebraic $\rational$-groups on low-dimensional manifolds}

\catchline{}{}{}{}{}

\title{Volume-preserving actions of simple algebraic $\rational$-groups on low-dimensional manifolds}

\author{Dave Witte Morris}

\address
{Department of Mathematics and Computer Science, University of Lethbridge, 
\\ Lethbridge, Alberta, T1K~3M4, Canada
\\Dave.Morris@uleth.ca, http://people.uleth.ca/$\sim$dave.morris/}

\author{Robert J.\ Zimmer}

\address{Office of the President, University of Chicago, 
\\ Chicago, Illinois 60637, USA
\\president@uchicago.edu, http://president.uchicago.edu/}

\maketitle

\begin{history}
\received{(Day Month Year)}
\revised{(Day Month Year)}
\end{history}

\begin{abstract}
We prove that $\SL(n,\rational)$ has no nontrivial, $C^\infty$, volume-preserving action on any compact manifold of dimension strictly less than~$n$. More generally, suppose $\GG$ is a connected, isotropic, almost-simple algebraic group over~$\rational$, such that the simple factors of every localization of $\GG$ have rank $\ge 2$. If there does not exist a nontrivial homomorphism from $\GG(\real)^\circ$ to $\GL(d,\complex)$,
then every $C^\infty$, volume-preserving action of $\GG(\rational)$ on any compact $d$-dimensional manifold must factor through a finite group. 
\end{abstract}

\keywords{group action; algebraic group; volume-preserving; manifold.} 

\ccode{AMS Subject Classification:	37C85; 20G30, 22F99, 57S99.}

\section{Introduction}

The second author has conjectured that if $\GG$ is a simple algebraic $\rational$-group, and $\rank_{\real}\GG \ge 2$, then every $C^\infty$, volume-preserving action of the arithmetic group $\GG(\integer)$ on an compact manifold of small dimension must be \emph{finite}. (This means that the action factors through the action of a finite group. See \cite{Fisher-AroundZimmer} for a precise statement of the conjecture and a survey of progress on this problem.) In this paper, we show that known results imply the analogue of the conjecture with $\GG(\rational)$ in the place of $\GG(\integer)$. For example, we establish:

\begin{thm} \label{SLnQNoAct}
$\SL(n,\rational)$ has no nontrivial, $C^\infty$, volume-preserving action on any compact manifold of dimension strictly less than~$n$.
\end{thm}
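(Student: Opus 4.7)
The plan is to derive \cref{SLnQNoAct} from the general theorem announced in the abstract, applied with $\GG = \SSL_n$, and then to upgrade the ``factors through a finite group'' conclusion to outright triviality.

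First I would verify the three hypotheses of the general theorem. The group $\SSL_n$ is connected, $\rational$-split (hence $\rational$-isotropic), and almost-simple, so the structural hypothesis is immediate. Every localization $\SSL_n(\rational_v) = \SL(n,\rational_v)$ is itself almost-simple of rank $n-1$, so the condition that the simple factors of every localization have rank $\ge 2$ holds exactly when $n \ge 3$. For the representation-theoretic hypothesis at a given $d < n$, I need that no nontrivial homomorphism $\SL(n,\real) \to \GL(d,\complex)$ exists. Continuously this is classical, since the minimal-dimensional nontrivial complex representation of $\mathfrak{sl}_n(\real)$ is the standard representation on $\complex^n$. For arbitrary (possibly discontinuous) abstract homomorphisms I would cite the Borel--Tits classification of abstract homomorphisms between simple algebraic groups over infinite fields of characteristic zero: any such homomorphism factors through an algebraic morphism post-composed with a field embedding $\real \hookrightarrow \complex$, and the dimension of the target is not lowered.

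With the three hypotheses in hand (for $n \ge 3$), the general theorem yields that every $C^\infty$, volume-preserving action of $\SL(n,\rational)$ on a compact manifold of dimension $d < n$ factors through a finite quotient. To upgrade this to triviality I would use that $\mathrm{PSL}(n,\rational)$ is an infinite simple group (for $n \ge 2$) and that $\SL(n,\rational)$ has finite center; any normal subgroup of $\SL(n,\rational)$ of finite index would then have to equal the whole group, so $\SL(n,\rational)$ admits no nontrivial finite quotient and the action is trivial. The small cases $n \le 2$ lie outside the reach of the general theorem: $n = 1$ is vacuous, while $n = 2$ would require a separate argument handling the possible $0$- and $1$-dimensional compact targets.

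The main obstacle is the representation-theoretic hypothesis: controlling arbitrary (possibly discontinuous) abstract homomorphisms $\SL(n,\real) \to \GL(d,\complex)$ in low dimensions requires Borel--Tits-style rigidity of abstract homomorphisms, rather than merely the highest-weight classification of Lie-algebra representations.
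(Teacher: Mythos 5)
Your proposal is correct and follows essentially the same route as the paper: \cref{SLnQNoAct} (in its nontrivial range $n\ge 3$) is obtained by applying \cref{GQNOACT} with $\GG=\SSL_n$, verifying the rank condition and the non-existence of low-dimensional representations of $\SL(n,\real)$ exactly as you do, and then upgrading ``finite'' to ``trivial'' via the projective simplicity of $\SL(n,\rational)$ (a step the paper leaves implicit). The only superfluous ingredient is your appeal to Borel--Tits rigidity of abstract homomorphisms: hypothesis~(\ref{GQNOACT-d}) of \cref{GQNOACT} only concerns \emph{continuous} homomorphisms $\GG(\real)^\circ\to\GL(d,\complex)$, so the classical highest-weight bound already suffices.
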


\begin{rem}
$\SL(n,\rational)$ contains large finite subgroups whenever $n$~is large (such as an elementary abelian group of order $2^{n-1}$). Therefore, topological arguments imply that if $M$ is any compact manifold, then there is some~$n$, such that $\SL(n,\rational)$ has no nontrivial, $C^0$ action on~$M$ (see \cite[Thm.~2.5]{MannSu-ActionsOfPGrps}). However, unlike in \cref{SLnQNoAct}, the value of~$n$ depends on details of the topology of~$M$, not just its dimension, because every finite group acts freely on some compact, connected, 2-dimensional manifold \cite[Thm.~7.12]{FarbMargalit-MappingClassGrps}.
\end{rem}

The nontrivial part of \cref{SLnQNoAct} (namely, when $n \ge 3$) is a special case of the following much more general result:

\begin{thm} \label{GQNOACT}
Assume:
	\begin{enumerate} \renewcommand{\theenumi}{\alph{enumi}}
	\item $\GG$ is an isotropic, almost-simple, linear algebraic group over\/~$\rational$, such that, for every place~$v$ of\/~$\rational$, the\/ $\rational_v$-rank of every simple factor of\/ $\GG(\rational_v)$ is at least two,
	\item \label{GQNOACT-d}
	$d \in \integer^+$, such that there are no nontrivial, continuous homomorphisms from\/ $\GG(\real)^\circ$ to\/ $\GL(d,\complex)$,
	and
	\item $G$ is a subgroup of finite index in\/~$\GG(\rational)$.
	\end{enumerate}
Then every $C^\infty$, volume-preserving action of~$G$ on any $d$-dimensional compact manifold~$M$ is finite.
\end{thm}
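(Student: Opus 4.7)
The plan is to restrict the action to the arithmetic lattice $\Gamma := G \cap \GG(\integer)$, apply the resolution of Zimmer's conjecture for higher-rank lattices to force $\rho|_\Gamma$ to be finite, and then use the Congruence Subgroup Property together with the projective simplicity of $\GG(\rational)$ to propagate finiteness to all of~$G$.

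Concretely, let $\rho$ denote the action, $K := \ker\rho$, and $\Gamma := G \cap \GG(\integer)$. Since $G$ has finite index in $\GG(\rational)$, $\Gamma$ has finite index in $\GG(\integer)$ and is therefore an irreducible arithmetic lattice in $\GG(\real)^\circ$ of real rank $\geq 2$. Hypothesis~(b) says that $d = \dim M$ is strictly below the minimal dimension of any nontrivial continuous representation of $\GG(\real)^\circ$, so the Brown--Fisher--Hurtado theorem (in both its cocompact and non-uniform higher-rank forms) forces $\rho|_\Gamma$ to have finite image; equivalently, $K \cap \Gamma$ has finite index in~$\Gamma$. Invoking the Congruence Subgroup Property for~$\GG$ (available because every simple factor of every localization has rank $\geq 2$), one then obtains a principal congruence subgroup
\[
\Gamma(N) := \ker\bigl(\GG(\integer) \to \GG(\integer/N\integer)\bigr) \subseteq K \cap \Gamma.
\]

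It remains to show that $K$, a normal subgroup of~$G$ containing the infinite noncentral subgroup~$\Gamma(N)$, has finite index in~$G$. For this I would invoke the projective simplicity of $\tilde\GG(\rational)$, where $\tilde\GG \to \GG$ is the simply-connected cover: by the Kneser--Tits conjecture (valid here by the isotropy and higher-rank hypotheses), $\tilde\GG(\rational)/Z(\tilde\GG(\rational))$ is abstractly simple. Combined with the Congruence Subgroup Property to control the kernel and cokernel of the isogeny $\tilde\GG \to \GG$, this forces every normal subgroup of a finite-index subgroup of~$\GG(\rational)$ that contains an infinite noncentral set to itself be of finite index. Applying this conclusion to $\Gamma(N) \subseteq K \triangleleft G$ finishes the proof. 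The principal obstacle is precisely this last propagation step: because $\Gamma(N)$ has infinite index in~$\GG(\rational)$, no direct theorem on finite-index normal subgroups applies, and one must extract the conclusion from the abstract normal-subgroup structure of $\GG(\rational)$---which is where the rank-$\geq 2$ hypothesis at every place becomes essential.
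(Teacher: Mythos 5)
Your architecture---restrict to the single arithmetic group $\Gamma = G \cap \GG(\integer)$, get finiteness there from the resolution of Zimmer's conjecture, then propagate by normal-subgroup structure---is genuinely different from the paper's, but both of its key steps have gaps. First, the appeal to Brown--Fisher--Hurtado does not cover the hypotheses of the theorem. Since $\GG$ is $\rational$-isotropic, $\Gamma$ is a \emph{non-uniform} lattice, and the non-uniform case of Zimmer's conjecture is known only for $\SL(n,\integer)$ and a few other families, not for arbitrary higher-rank $\GG(\integer)$ (nor for irreducible lattices in the semisimple, non-simple groups $\GG(\real)$ that arise when $\GG$ is $\rational$-simple but not absolutely simple). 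More seriously, hypothesis (b) permits $d$ up to one less than the minimal dimension of a nontrivial representation of $\GG(\real)^\circ$, whereas the proven volume-preserving bounds are expressed in terms of the real rank or the minimal codimension of a parabolic; these coincide with the representation-theoretic bound for $\SL_n$ but are far smaller in general (for split $E_8$, hypothesis (b) allows $d \le 247$ while the known results stop near $d = 57$). The paper avoids all of this: it uses only Zimmer's 1991 theorem that the volume-preserving action of the property-$(T)$ group $\Gamma_S = \cover\GG(\integer_S)$ is measurably isometric, and then exploits the fact that $G$ contains \emph{all} the $S$-arithmetic subgroups simultaneously---comparing $\widehat\Gamma_{\{2\}}$ with $\widehat\Gamma_{\{2,q\}}$ via the Congruence Subgroup Property forces each $\cover\GG(\integer_q)$, and hence $\Gamma_{\{2\}}$, to act trivially.

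Second, your propagation principle is false as stated. Projective simplicity (Kneser--Tits) applies to $\cover\GG(\rational)$, and what it actually yields is that the kernel $K$ contains $\pi\bigl(\cover\GG(\rational)\bigr)$ once $K$ meets that subgroup noncentrally; it does \emph{not} force $K$ to have finite index in $G$, because $\GG(\rational)/\pi\bigl(\cover\GG(\rational)\bigr)$ can be an infinite abelian group of finite exponent. Concretely, for $\GG = \mathbf{PGL}_n$ the noncentral normal subgroup $\mathrm{PSL}_n(\rational) \triangleleft \mathrm{PGL}_n(\rational)$ has infinite index, the quotient being $\rational^\times/(\rational^\times)^n$. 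So ``every normal subgroup containing an infinite noncentral set has finite index'' fails, and your conclusion that $K$ has finite index in $G$ does not follow from group theory alone. The missing ingredient is topological: the Mann--Su theorem that an abelian group of finite exponent acting faithfully and continuously on a compact manifold is finite, which is exactly how the paper disposes of the quotient $G/\pi\bigl(\cover\GG(\rational)\bigr)$. (Incidentally, the congruence subgroup $\Gamma(N)$ plays no role in the propagation step: any infinite normal subgroup of $G$ already meets $\pi\bigl(\cover\GG(\rational)\bigr)$ noncentrally, so CSP is not what is needed there.)
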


\begin{rem}[anisotropic groups] \label{anisotropic}
Assume, for simplicity, that $\GG$ is connected. Then the assumption that $\GG$ is isotropic can be eliminated if we add two hypotheses on the universal cover~$\cover\GG$: 
	\begin{romanlist}
	\item $\cover\GG(\rational)$ is projectively simple,
	and
	\item sufficiently large $S$-arithmetic subgroups of~$\cover\GG$ have the Congruence Subgroup Property.
	\end{romanlist}
Both of these hypotheses are known to be true unless $\GG$ is anisotropic of type $A_n$, $D_4$, or~$E_6$. See \cref{AnisotropicPf} for more details.
\end{rem}

\begin{rems}
	\begin{enumerate}
	\item To satisfy the requirement that the $\rational_v$-rank of every simple factor of $\GG(\rational_v)$ is at least two, it suffices to let $\GG$ be an absolutely almost-simple algebraic group over~$\rational$, such that $\rank_{\rational} \GG \ge 2$.  In particular, we can take $\GG = \SSL_n$ with $n \ge 3$. This yields \cref{SLnQNoAct}.
	\item The assumption that the subgroup~$G$ has finite index can be replaced with the weaker assumption that it contains the commutator subgroup $[\GG^\circ(\rational), \GG^\circ(\rational)]$.
	\item Our bound on the dimension~$d$ of~$M$ is probably not sharp. 
	In particular, we conjecture that $\SL(n,\rational)$ has no volume-preserving action on any compact manifold of dimension strictly less than $n^2 - 1$.
	In the general case, it should suffice to assume that $\GG(\real)^\circ$ has no simple factor of dimension $\le d$. 	 
	\end{enumerate}
\end{rems}

\section{Proof of \cref{GQNOACT}}

Assume the situation of \cref{GQNOACT}. By passing to a subgroup of finite index, we assume that $\GG$ is connected.

\begin{notation}
	\begin{enumerate}
	\item $\cover\GG$ is the universal cover of~$\GG$. (We may realize $\cover\GG$ as a Zariski-closed subgroup of $\SSL_N$, for some~$N$ \cite[Thm.~8.6, p.~63]{Humphreys-AlgicGrps}, so $\cover\GG(R)$ is defined for any integral domain~$R$ of characteristic zero.)
	\item $\pi \colon \cover\GG \to \GG$ is the natural homomorphism.
	\item $\ZZ$ is the kernel of~$\pi$ (so $\ZZ$ is a finite, central $\rational$-subgroup of~$\cover\GG$).
	\item If $S$ is any finite set of prime numbers:
		\begin{enumerate}
		\item $\integer_S$ is the ring of $S$-integers. That is, $\integer_S = \integer[1/p_1,\ldots,1/p_r]$, where $S = \{p_1,\ldots,p_r\}$.
		\item $\Gamma\!_S = \cover\GG \bigl( \integer_S \bigr)$, so $\Gamma\!_S$ is an $S$-arithmetic subgroup of~$\cover\GG$.
		\item $\widehat\Gamma\!_S$ is the profinite completion of~$\Gamma\!_S$.
		\end{enumerate}
	\item $\integer_p$ is the ring of $p$-adic integers, for any prime~$p$.
	\end{enumerate}
\end{notation}

We begin by recalling a few well-known facts about $\cover\GG(\rational)$:

\begin{lem} \label{G/Univ} 
	\begin{enumerate}
	\item  \label{G/Univ-simple}
	Every proper, normal subgroup of\/ $\cover\GG(\rational)$ is contained in the center of\/~$\cover\GG$, and is therefore finite. 
	\item \label{G/Univ-inG}
	$\pi \bigl( \cover\GG(\rational) \bigr) \subseteq G$.
	\item  \label{G/Univ-abel}
	$\GG(\rational) / \pi \bigl( \cover\GG(\rational) \bigr)$ is an abelian group whose exponent divides\/ $|\ZZ(\complex)|$. {\upshape(}In particular, $\pi \bigl( \cover\GG(\rational) \bigr)$ is a normal subgroup of\/ $\GG(\rational)$.{\upshape)}
	\end{enumerate}
\end{lem}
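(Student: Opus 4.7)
My plan is to handle the three parts in order; parts~\ref{G/Univ-inG} and~\ref{G/Univ-abel} will follow formally once part~\ref{G/Univ-simple} is established, so the real content lies in the first assertion.

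For part~\ref{G/Univ-simple}, I would invoke the relevant case of the Margulis--Platonov conjecture. Because $\cover\GG$ is simply connected, $\rational$-isotropic, and almost-simple, and because every simple factor of each local group $\cover\GG(\rational_v)$ has $\rational_v$-rank at least two, the Margulis--Platonov conjecture is known to hold for $\cover\GG$ (the local-rank hypothesis rules out the only potential exceptions, which involve local factors that are anisotropic of type~$A_n$). Therefore every noncentral normal subgroup of $\cover\GG(\rational)$ equals $\cover\GG(\rational)$; equivalently, every proper normal subgroup is contained in $\ZZ(\rational)$, and is in particular finite.

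For part~\ref{G/Univ-inG}, set $N := \pi\bigl(\cover\GG(\rational)\bigr) \cong \cover\GG(\rational)/\ZZ(\rational)$, which is an infinite abstract simple group by part~\ref{G/Univ-simple}. Since $G$ has finite index in $\GG(\rational)$, the intersection $G \cap N$ has finite index in $N$, so its normal core in $N$ is a finite-index normal subgroup of the infinite simple group $N$; this core must equal $N$, giving $N \subseteq G$.

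For part~\ref{G/Univ-abel}, the fact that $\ZZ$ is central in $\cover\GG$ makes the connecting map in the long exact Galois cohomology sequence a group homomorphism, yielding the exact sequence
$$\cover\GG(\rational) \xrightarrow{\pi} \GG(\rational) \xrightarrow{\delta} H^1(\rational,\ZZ).$$
Thus $N = \ker\delta$ is normal in $\GG(\rational)$, and $\GG(\rational)/N$ embeds into the abelian group $H^1(\rational,\ZZ)$. Since the base field has characteristic zero, $\ZZ$ is a finite étale $\rational$-group scheme, so multiplication by $n := |\ZZ(\complex)|$ is the zero endomorphism of $\ZZ$; hence $H^1(\rational,\ZZ)$, and therefore $\GG(\rational)/N$, is killed by~$n$. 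The only substantive input in the whole argument is the Margulis--Platonov theorem invoked in part~\ref{G/Univ-simple}; the rest is standard and short.
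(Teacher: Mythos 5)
Your proposal follows essentially the same route as the paper: projective simplicity of $\cover\GG(\rational)$ as the one substantive input (the paper cites Gille's account of the Kneser--Tits problem, which is exactly the isotropic case of the Margulis--Platonov conjecture you invoke), then a finite-index/normal-core argument for the second part, and the same Galois cohomology exact sequence, with the connecting map a homomorphism by centrality of $\ZZ$, for the third. The only slip is your claim that $N = \pi\bigl(\cover\GG(\rational)\bigr) \iso \cover\GG(\rational)/\ZZ(\rational)$ is simple: the first part only gives simplicity modulo the center of $\cover\GG$, and $\ZZ$ may be a proper subgroup of that center (e.g.\ for $\GG = \SSL_n$ with $n$ even one has $\ZZ$ trivial and $N = \SL(n,\rational) \supseteq \{\pm I\}$). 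This does not damage the argument: a finite-index normal subgroup of $N$ pulls back to a finite-index normal subgroup of $\cover\GG(\rational)$, which by the first part is either central (hence finite, impossible in an infinite group) or everything, so the normal core is still all of $N$.
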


\begin{proof}
(\ref{G/Univ-simple}) See \cite[Thm.~8.1]{Gille-KneserTits}. 
This relies on our assumption that $\GG$ is isotropic.

(\ref{G/Univ-inG}) Since $G$ has finite index in $\GG(\rational)$, it contains a finite-index subgroup of $\pi \bigl( \cover\GG(\rational) \bigr)$. However, we know from (\ref{G/Univ-simple}) that $\cover\GG(\rational)$ has no proper subgroups of finite index. Therefore $G$ must contain all of $\pi \bigl( \cover\GG(\rational) \bigr)$.

(\ref{G/Univ-abel}) We have the following long exact sequence of Galois cohomology groups \cite[(1.11), p.~22]{PlatonovRapinchukBook}:
	$$ H^0 \bigl( \rational; \cover\GG \bigr) \longrightarrow 
	H^0 \bigl( \rational; \GG \bigr) \longrightarrow
	H^1 \bigl( \rational; \ZZ \bigr) .$$
In other words,
	$$ \cover\GG(\rational) \longrightarrow 
	\GG(\rational) \stackrel{\delta}{\longrightarrow}
	H^1 \bigl( \mathop{\mathrm{Gal}}(\overline{\rational}/ \rational), \ZZ(\complex) \bigr) .$$
Since $\ZZ$ is central in~$\GG$, it is easy to see that the connecting map $\delta$ is a group homomorphism. Therefore, the desired conclusion follows from the observation that multiplication by $|\ZZ(\complex)|$ annihilates the abelian group $H^1 \bigl( \,{*}\, ; \ZZ(\complex) \bigr)$.
\end{proof}

Since $\Gamma\!_S \subset \cover\GG(\rational)$, and $G$ acts on~$M$, \fullcref{G/Univ}{inG} provides an action of~$\Gamma\!_S$ on~$M$ (for any~$S$). The following theorem about this action requires our assumption that there are no nontrivial, continuous homomorphisms from $\GG(\real)^\circ$ to $\GL(\dim M,\complex)$. It also uses our assumption that simple factors of $\GG(\rational_v)$ have rank at least two. (This implies that $\GG(\rational_v)$ has Kazhdan's property $(T)$.)

\begin{thm}[{}{\cite[Cor.~1.3]{Zimmer-SpectrumEntropy}}]
If $S$ is any finite set of prime numbers, then there exist
	\begin{itemlist}
	\item a continuous action of a compact group~$K_S$ on a compact metric space~$X_S$,
	and
	\item a homomorphism $\varphi_S \colon \Gamma\!_S \to K_S$,
	\end{itemlist}
such that the resulting action of\/~$\Gamma\!_S$ on~$X_S$ is measurably isomorphic {\upshape(}a.e.{\upshape)}\ to the action of\/~$\Gamma\!_S$ on~$M$.
\end{thm}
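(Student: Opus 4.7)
The plan is to analyze the derivative cocycle of the $\Gamma\!_S$-action on~$M$ and combine cocycle superrigidity with Kazhdan's property~$(T)$. First, I would choose a measurable trivialization of the tangent bundle of~$M$ and let $\alpha \colon \Gamma\!_S \times M \to \GL(d,\real)$ be the resulting derivative cocycle. Because every simple factor of $\cover\GG(\rational_v)$ has $\rational_v$-rank at least two, $\Gamma\!_S$ sits as an irreducible $S$-arithmetic lattice inside $H := \cover\GG(\real) \times \prod_{p \in S} \cover\GG(\rational_p)$, and Zimmer's cocycle superrigidity theorem for $S$-arithmetic lattices applies to~$\alpha$: after a measurable change of trivialization, $\alpha$ is cohomologous to a cocycle of the form $(g,x) \mapsto \rho(g)\,\beta(g,x)$, where $\rho \colon H \to \GL(d,\real)$ is a continuous homomorphism and $\beta$ takes values in a compact subgroup of $\GL(d,\real)$ centralizing the image of~$\rho$.

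Next, I would use the hypothesis on~$d$ to show that $\rho$ is trivial. The restriction of~$\rho$ to $\cover\GG(\real)^\circ$ is a continuous homomorphism into $\GL(d,\real) \hookrightarrow \GL(d,\complex)$; after quotienting by the finite central kernel it descends to a representation of $\GG(\real)^\circ$, which by the assumption on absence of such representations must be trivial. For each $p \in S$, any continuous representation of the totally disconnected group $\cover\GG(\rational_p)$ into the Lie group $\GL(d,\real)$ has open kernel, because real Lie groups have no small subgroups; since the proper normal subgroups of $\cover\GG(\rational_p)$ are central and finite, the only open normal subgroup is all of $\cover\GG(\rational_p)$, so $\rho$ is trivial on each $p$-adic factor as well. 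Hence $\alpha$ is cohomologous to a cocycle into a compact subgroup $K \subseteq \GL(d,\real)$.

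Third, I would turn this compact-valued cocycle into a genuine compact-group model. All Lyapunov exponents of~$\alpha$ vanish, so Pesin theory yields zero metric entropy, and the coboundary transferring~$\alpha$ into~$K$ can be used to pull back a $K$-invariant inner product to construct a measurable $\Gamma\!_S$-invariant Riemannian structure on~$M$. Property~$(T)$, which $\Gamma\!_S$ inherits from~$H$, is then the rigidity ingredient needed to promote this measurable data into a genuine measurable conjugacy: the $\Gamma\!_S$-action on~$M$ is measurably isomorphic to an action of a compact group~$K_S$ on a compact metric space~$X_S$, through a homomorphism $\varphi_S \colon \Gamma\!_S \to K_S$, constructed as the closure of the image of~$\Gamma\!_S$ in the isometry group of an appropriate metric completion built from the invariant structure on~$M$.

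The main obstacle will be the first step: the $\Gamma\!_S$-action on~$M$ need not be ergodic, so the standard form of cocycle superrigidity does not apply directly. One must either develop an $S$-arithmetic cocycle superrigidity theorem that handles non-ergodic volume-preserving actions (by applying the ergodic version on each ergodic component and assembling the resulting measurable data coherently), or reduce to the ergodic case first. A secondary subtlety is the integrability needed in step three so that the measurable invariant structure produces a genuinely compact completion rather than merely a bounded one; here the smoothness of the original action and property~$(T)$ work together to supply enough regularity.
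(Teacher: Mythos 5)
This statement is not proved in the paper at all: it is imported verbatim from \cite[Cor.~1.3]{Zimmer-SpectrumEntropy}, so there is no in-paper argument to compare against. Your outline is, in substance, a reconstruction of the proof in that reference: cocycle superrigidity for the $S$-arithmetic lattice $\Gamma\!_S$ in $\cover\GG(\real)\times\prod_{p\in S}\cover\GG(\rational_p)$ kills the semisimple part of the derivative cocycle (the hypothesis on~$d$ disposes of the real factor, and total disconnectedness plus projective simplicity disposes of the $p$-adic factors, exactly as you argue), leaving a cocycle into a compact group, i.e.\ a measurable invariant Riemannian metric; property~$(T)$ then converts this into a measurable isomorphism with an isometric action, and $K_S$ is the closure of $\Gamma\!_S$ in the isometry group. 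Two remarks on where your sketch is thinner than the source. First, your third step is not a routine ``promotion'': it is the main theorem of the cited paper, and the mechanism is specific --- property~$(T)$ is applied to the $\Gamma\!_S$-representation on $L^2$-sections of the bundle of positive-definite forms, where almost-invariant vectors coming from the measurable invariant metric yield a genuinely invariant, square-integrable metric, and it is this integrability (not smoothness of the original action) that makes the metric completion compact. Second, your worry about non-ergodicity is legitimate but is exactly the issue the source already handles (via ergodic decomposition), so it is not an obstacle to the statement as quoted.
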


We may assume that $\varphi_S(\Gamma\!_S)$ is dense in~$K_S$. This implies:

\begin{lem}[{}{cf.\ \cite[Cor.~1.5]{Zimmer-SpectrumEntropy}}]
$K_S$ is profinite.
\end{lem}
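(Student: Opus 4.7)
The strategy is to show that $K_S^\circ = 1$: since $K_S/K_S^\circ$ is automatically profinite, this will suffice. I argue by contradiction, and aim to produce, from a nontrivial $K_S^\circ$, a nontrivial continuous homomorphism $\GG(\real)^\circ \to \GL(d,\complex)$, contradicting hypothesis~(b) of \cref{GQNOACT}.

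The first step is a reduction to compact Lie groups via Peter--Weyl: a nontrivial connected compact group admits a nontrivial continuous unitary representation. Pulling back along such a representation $\bar\rho\colon K_S \to U(N)$ (extended from $K_S^\circ$ to $K_S$ by absorbing the finite group $K_S/K_S^\circ$), I obtain a homomorphism $\psi := \bar\rho \circ \varphi_S\colon \Gamma\!_S \to U(N)$ whose image is dense in a positive-dimensional subgroup. By hypothesis~(a), $\Gamma\!_S = \cover\GG(\integer_S)$ is an irreducible higher-rank $S$-arithmetic lattice in $\cover\GG(\real) \times \prod_{p \in S}\cover\GG(\rational_p)$, so Margulis superrigidity (in its $S$-arithmetic form, applied to homomorphisms into compact Lie groups) shows that $\psi$, restricted to some finite-index subgroup, extends to a continuous homomorphism $\Phi$ of the entire product group into $U(N)$. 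Each factor $\cover\GG(\rational_p)$ is totally disconnected; combined with \fullcref{G/Univ}{simple} (whose $p$-adic analogue is the Margulis--Platonov property, which holds under the higher-rank assumption), this forces $\Phi$ to be trivial on every $p$-adic factor, so it factors through $\cover\GG(\real)^\circ$.

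The main obstacle is then a pair of dimension and isogeny issues: the resulting nontrivial homomorphism is from $\cover\GG(\real)^\circ$ into $U(N)$, whereas hypothesis~(b) speaks of $\GG(\real)^\circ$ into $\GL(d,\complex)$. The passage from $\cover\GG$ to $\GG$ is the minor point: since $\ker\pi$ is finite and central, a continuous representation of $\cover\GG(\real)^\circ$ yields (perhaps after a tensor or adjoint construction to kill the central kernel) a representation of $\GG(\real)^\circ$ whose dimension is still controlled by~$N$. The genuine difficulty is the dimensional bound $N \le d$: for this I would invoke the structure of the construction in \cite{Zimmer-SpectrumEntropy}, in which $K_S$ is assembled from the measurable isometric reduction of the derivative cocycle of the $\Gamma\!_S$-action on the $d$-dimensional manifold~$M$, so that $K_S$ embeds into $\mathrm{O}(d)$ from the outset. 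With this identification, the extracted homomorphism lands in $\GL(d,\complex)$, yielding the desired contradiction and thus forcing $K_S^\circ = 1$.
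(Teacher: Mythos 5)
Your opening moves coincide with the paper's: both arguments rest on Peter--Weyl together with Margulis superrigidity for the $S$-arithmetic lattice $\Gamma\!_S$. But your closing step has a genuine gap. Everything is made to hinge on the dimension bound $N \le d$, which you propose to extract from the claim that ``$K_S$ embeds into $\mathrm{O}(d)$ from the outset.'' That claim is false: in \cite[Cor.~1.3]{Zimmer-SpectrumEntropy} the group $K_S$ is merely a compact group acting on a compact metric space, not a subgroup of the orthogonal group of a tangent space, and indeed the whole point of the argument following this lemma is that $K_S \iso \widehat\Gamma\!_S \iso \bigtimes_{p \notin S} \cover\GG(\integer_p)$ --- an infinite profinite group that embeds in no $\GL(N,\complex)$ at all. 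So the representation $\bar\rho$ produced by Peter--Weyl comes with no control on~$N$, and the intended contradiction with \fullcref{GQNOACT}{d} cannot be reached along this route. (A smaller slip: $K_S/K_S^\circ$ is profinite but need not be finite --- that it may be infinite is exactly what the lemma allows --- so one cannot ``absorb'' it into a finite extension of a representation of $K_S^\circ$; instead apply Peter--Weyl to $K_S$ itself to obtain a finite-dimensional representation that is nontrivial on $K_S^\circ$.)

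The repair is to drop hypothesis (\ref{GQNOACT-d}) from this argument entirely, which is what the paper does. Once superrigidity extends $\psi$ (on a finite-index subgroup) to a continuous homomorphism $\Phi$ of $\cover\GG(\real)\times\prod_{p\in S}\cover\GG(\rational_p)$ into the compact group $U(N)$, the restriction of $\Phi$ to $\cover\GG(\real)^\circ$ is automatically trivial for \emph{every}~$N$: by the rank hypothesis each simple factor of $\cover\GG(\real)^\circ$ is noncompact, and a noncompact simple Lie algebra is not a subalgebra of the (compact) Lie algebra of $U(N)$. Combined with your correct observation that the totally disconnected factors die, this forces $\psi(\Gamma\!_S)$ to be finite --- which is precisely the statement the paper cites from \cite[Thm.~B(iii), pp.~258--259]{MargulisBook}: since $\GG(\real)$ has no compact factors, every homomorphism from $\Gamma\!_S$ into a compact Lie group has finite image. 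Feeding this into Peter--Weyl (write $K_S$ as a projective limit of compact Lie groups; the image of $\Gamma\!_S$ in each is finite and dense, so each is finite) gives that $K_S$ is profinite, with no appeal to hypothesis (\ref{GQNOACT-d}) and no dimension bound needed.
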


\begin{proof}
It is an easy consequence of the Peter-Weyl Theorem that every compact group is a projective limit of compact Lie groups \cite[Cor.~2.43, p.~51]{HofmannMorris-CpctGrps}. 
However, since $\GG(\real)$ has no compact factors, the Margulis Superrigidity Theorem \cite[Thm.~B(iii), pp.~258--259]{MargulisBook} tells us that any homomorphism from~$\Gamma\!_S$ into a compact Lie group must have finite image. Since $\varphi_S(\Gamma\!_S)$ is dense in~$K_S$, this implies that $K_S$ is a projective limit of finite groups, as desired.
\end{proof}

Therefore, we may assume $K_S$ is the profinite completion~$\widehat\Gamma\!_S$ of~$\Gamma\!_S$. We have the following well-known description of~$\widehat\Gamma\!_S$ (because $\GG$ is isotropic).

\begin{CSP}[{}Congruence Subgroup Property 
	{\cite{PrasadRaghunathan-CSPMetaplectic,Raghunathan-CSP,Raghunathan-CSP2}}] \label{CSP}
If $S$ is nonempty, then the natural inclusion\/ $\Gamma\!_S \hookrightarrow \bigtimes_{p \notin S} \cover\GG(\integer_p)$ extends to an isomorphism\/ $\widehat\Gamma\!_S \iso \bigtimes_{p \notin S} \cover\GG(\integer_p)$. 
\end{CSP}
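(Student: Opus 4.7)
The plan is to split the claimed isomorphism into two parts by factoring it through the \emph{congruence completion} $\overline{\Gamma}$, by which I mean the closure of $\Gamma\!_S$ in $\bigtimes_{p \notin S} \cover\GG(\integer_p)$. The inclusion $\Gamma\!_S \hookrightarrow \bigtimes_{p \notin S} \cover\GG(\integer_p)$ factors canonically through a continuous surjection from~$\widehat\Gamma\!_S$ onto~$\overline{\Gamma}$, so the theorem reduces to showing (i)~that $\overline{\Gamma} = \bigtimes_{p \notin S} \cover\GG(\integer_p)$, and (ii)~that the \emph{congruence kernel} $C(S) = \ker \bigl( \widehat\Gamma\!_S \to \overline{\Gamma} \bigr)$ is trivial.

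For~(i) I would invoke strong approximation for the simply connected, almost-simple, isotropic $\rational$-group $\cover\GG$. Because $\GG$ is $\rational$-isotropic, $\cover\GG(\real)$ is noncompact, so the classical strong approximation theorem gives density of $\cover\GG(\rational)$ in the restricted direct product of the $\cover\GG(\rational_p)$ over all finite primes~$p$. Intersecting with $\bigtimes_{p \notin S} \cover\GG(\integer_p)$, and using that $\integer_S$ is dense in $\prod_{p \notin S} \integer_p$, yields density of $\Gamma\!_S$ in the target, which is exactly~(i).

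For~(ii) I would proceed in two stages. The main stage is Raghunathan's \emph{centrality theorem}, asserting that $C(S)$ lies in the center of~$\widehat\Gamma\!_S$. Here the idea is to use a proper $\rational$-parabolic of~$\GG$ (available because $\GG$ is isotropic) to produce $\integer_S$-unipotent subgroups of~$\Gamma\!_S$, and then to exploit commutator identities together with bounded-generation estimates to show that every element of $C(S)$ commutes with these unipotents, forcing centrality. In the second stage, one shows that the resulting central extension splits: since $\cover\GG$ is simply connected, the rank is at least two at every place, and $S$ is nonempty, the Moore--Matsumoto classification of metaplectic kernels forces $C(S) = 1$. The main obstacle is the centrality step: the commutator computations are delicate and must be controlled uniformly across the infinitely many primes outside~$S$, and they depend essentially on the rank-two assumption to supply enough local unipotent generators. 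By comparison, strong approximation and the final Moore--Matsumoto vanishing are relatively formal once centrality is in hand.
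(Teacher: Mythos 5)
The paper offers no proof of this statement: it is imported verbatim from the literature, with the references \cite{Raghunathan-CSP,Raghunathan-CSP2,PrasadRaghunathan-CSPMetaplectic} doing all of the work. Your decomposition --- strong approximation to identify the congruence completion with $\bigtimes_{p \notin S} \cover\GG(\integer_p)$, Raghunathan's centrality theorem for the congruence kernel $C(S)$, and the metaplectic-kernel computation to kill the resulting central extension --- is precisely the architecture of the proof in those references, so in that sense you and the paper are taking the same route. Three remarks. First, your step (i) is sound: $\cover\GG$ is simply connected, almost $\rational$-simple and $\rational$-isotropic, so $\cover\GG(\real)$ is noncompact and strong approximation applies; the paper itself invokes exactly this in \cref{AnisotropicPf}. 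Second, the vanishing at the end of step (ii) uses the hypotheses in just the way you indicate: $S \ne \emptyset$ supplies a nonarchimedean place in the defining set of places, and the standing assumption that every simple factor of $\GG(\rational_p)$ has rank at least two makes $\cover\GG$ isotropic there, which is what forces the metaplectic kernel to vanish; but the attribution should be to Prasad--Raghunathan \cite{PrasadRaghunathan-CSPMetaplectic} rather than to Moore--Matsumoto, whose computations cover split and quasi-split groups, whereas the hypotheses here admit isotropic forms that are not quasi-split (e.g.\ $\SL_n$ of a division algebra). Third, and most importantly, the centrality step is not something your sketch establishes or could establish in a few lines --- it is the content of the two long papers \cite{Raghunathan-CSP,Raghunathan-CSP2} --- so your proposal, like the paper, ultimately rests on a citation, with the reduction to that citation made explicit; that is acceptable here, but you should label the centrality theorem as imported rather than proved.
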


Fix a prime number $q \neq 2$. The inclusion $\Gamma\!_{\{2\}} \subset \Gamma\!_{\{2,q\}}$ provides us with an action of~$\Gamma\!_{\{2\}}$ on~$X_{\{2,q\}}$, but this must be isomorphic to the action of~$\Gamma\!_{\{2\}}$ on~$X_{\{2\}}$ (since both are isomorphic to the action on~$M$). Therefore, the action of~$\widehat\Gamma\!_{\{2\}}$ on~$X_{\{2\}}$ must factor through~$\widehat\Gamma\!_{\{2,q\}}$ (a.e.).
Furthermore, if we use \cref{CSP} to identify $\widehat\Gamma\!_S$ with $\bigtimes_{p \notin S} \cover\GG(\integer_p)$, then it is obvious that $\cover\GG(\integer_q)$ is in the kernel of the homomorphism $\widehat\Gamma\!_{\{2\}} \to \widehat\Gamma\!_{\{2,q\}}$.
Therefore, $\cover\GG(\integer_q)$ acts trivially on~$X_{\{2\}}$ (a.e.). 

Since the subgroups $\cover\GG(\integer_q)$ generate a dense subgroup of $\bigtimes_{p\neq2} \cover\GG(\integer_p) \iso  \widehat\Gamma\!_{\{2\}}$, we conclude that $\widehat\Gamma\!_{\{2\}}$ acts trivially (a.e.). Therefore, $\Gamma\!_{\{2\}}$ acts trivially on~$M$ (not just a.e., because $\Gamma\!_{\{2\}}$ acts continuously on~$M$), so the action of $\cover\GG(\rational)$ has an infinite kernel. Hence, \fullcref{G/Univ}{simple} implies that the kernel is all of $\cover\GG(\rational)$.
This means that $\cover\GG(\rational)$ acts trivially on~$M$. 

So the action of~$G$ factors through $G/ \pi \bigl( \cover\GG(\rational) \bigr)$.
From \fullcref{G/Univ}{abel}, we know that this quotient is an abelian group of finite exponent, so the corollary of the following theorem tells us that the action is finite.

\begin{thm}[{}{\cite[Thm.~2.5]{MannSu-ActionsOfPGrps}}] \label{ZpInftyNotOnMfld}
If $A$ is any abelian group of prime exponent, then every $C^0$ action of~$A$ on any compact manifold is finite.
\end{thm}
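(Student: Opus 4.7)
The plan is to reduce the statement to the classical Smith-theoretic rank bound on elementary abelian $p$-groups acting effectively on a compact manifold, which is (essentially) the main content of~\cite{MannSu-ActionsOfPGrps}.  Let $p$ denote the prime exponent of~$A$, and let $K$ be the kernel of the given action of~$A$ on~$M$.  The goal is to show that $A/K$ is finite; once this is established, the action factors through the finite group $A/K$ and is therefore finite in the sense used in the paper.

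Since $A$ has exponent~$p$, the quotient $A/K$ is again an abelian group of exponent~$p$, and hence naturally a vector space over the field with $p$ elements.  In particular, every finitely generated subgroup of $A/K$ is isomorphic to $(\integer/p\integer)^k$ for some $k \ge 0$, and since the induced $A/K$-action on~$M$ is faithful by construction, every such subgroup acts effectively on~$M$ by homeomorphisms.

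The key input, and the main obstacle, is the Smith--Mann--Su rank bound: there exists a constant $C = C(M,p)$, depending only on the manifold~$M$ and the prime~$p$ (indeed, only on the mod-$p$ Betti numbers of~$M$), such that whenever $(\integer/p\integer)^k$ acts effectively on~$M$ by homeomorphisms, one has $k \le C$.  This is the genuinely nontrivial step, proved in \cite{MannSu-ActionsOfPGrps} by combining P.~A.~Smith's fixed-point theorems with the Borel construction in order to control the mod-$p$ cohomology of the fixed-point sets of a nested chain of subgroups; I would quote this bound rather than reprove it.  Given the bound, the conclusion is immediate: if $A/K$ were infinite, then as an infinite-dimensional vector space over $\integer/p\integer$ it would contain subgroups of the form $(\integer/p\integer)^k$ of arbitrarily large rank, contradicting $k \le C$.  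Hence $A/K$ is finite, and the proof is complete.
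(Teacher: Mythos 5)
The paper gives no proof of this statement at all---it is quoted verbatim as Theorem~2.5 of \cite{MannSu-ActionsOfPGrps}---so there is no internal argument to compare against. Your reduction is correct: the Mann--Su rank bound (a constant $C(M,p)$ bounding the rank of any elementary abelian $p$-group acting effectively on~$M$ by homeomorphisms) immediately forces the faithful quotient $A/K$, an $\mathbb{F}_p$-vector space, to be finite-dimensional and hence finite, which is exactly the content the citation is standing in for.
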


\begin{cor}
If $A$ is any abelian group of finite exponent, then every $C^0$ action of~$A$ on any compact manifold is finite.
\end{cor}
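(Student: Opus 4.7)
The plan is to reduce this corollary to \cref{ZpInftyNotOnMfld} in two stages: first decompose into $p$-primary components, and then induct on the exponent within each prime.

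Given the action $\alpha \colon A \to \mathrm{Homeo}(M)$, I would replace~$A$ by its image $B = \alpha(A)$, which is again an abelian group of finite exponent~$n$; the task is to show $B$ is finite. Because $B$ has bounded exponent, it admits an internal primary decomposition $B = \bigoplus_{p \mid n} B_p$ over the finite set of primes dividing~$n$, with each $B_p$ of exponent~$p^{a_p}$ for some $a_p \ge 1$. Since this sum is finite, it suffices to show that each $B_p$ is finite, and each $B_p$ is itself a subgroup of $\mathrm{Homeo}(M)$ acting on~$M$.

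I would then prove by induction on~$a$ that an abelian $p$-group~$B$ of exponent~$p^a$ acting on the compact manifold~$M$ is finite. The base case $a = 1$ is exactly \cref{ZpInftyNotOnMfld}. For the inductive step with $a \ge 2$, the subgroup~$pB$ has exponent~$p^{a-1}$ and acts on~$M$, so it is finite by the inductive hypothesis; the $p$-torsion subgroup $B[p] = \{b \in B : pb = 0\}$ has exponent~$p$ and is finite by the base case. The multiplication-by-$p$ endomorphism of~$B$ has kernel $B[p]$ and image $pB$, giving $B / B[p] \cong pB$; combined with finiteness of~$B[p]$, this forces $B$ itself to be finite and closes the induction.

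The argument is essentially elementary group theory once \cref{ZpInftyNotOnMfld} is available, so I would not expect a serious obstacle. The one point that needs some care is that $A$ itself need not be finitely generated (it could be, say, $(\integer/n\integer)^{\aleph_0}$), so the proof must manipulate the subgroup/quotient structure of~$B$ directly rather than reducing to a finitely generated subgroup; the decomposition and the short exact sequence $0 \to B[p] \to B \to pB \to 0$ at each step handle this without needing finite generation.
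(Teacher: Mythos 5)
Your proposal is correct and follows essentially the same route as the paper: reduce to the case of prime-power exponent via the primary decomposition, apply \cref{ZpInftyNotOnMfld} to the $p$-torsion subgroup, and conclude by an induction built on the multiplication-by-$p$ map. The paper phrases the induction as counting elements of each order $p^k$ rather than via the exact sequence $0 \to B[p] \to B \to pB \to 0$, but this is only a cosmetic difference.
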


\begin{proof}
We can assume the exponent of~$A$ is a power of a prime~$p$ (because $A$ is the direct product of its finitely many Sylow subgroups). We can also assume that the action of~$A$ is faithful, so the theorem tells us that $A$ has only finitely many elements of order~$p$. This means the kernel of the homomorphism $x \mapsto x^p$ is finite, so it is easy to prove by induction that $A$ has only finitely many elements of any order~$p^k$. Since $A$ has finite exponent, this implies that $A$ is finite.
\end{proof}

This completes the proof of \cref{GQNOACT}. We now discuss the generalization described in \cref{anisotropic}.

\begin{rem} \label{AnisotropicPf}
The assumption that $\GG$ is isotropic was used in only two places: the projective simplicity of $\cover\GG(\rational)$ (\fullcref{G/Univ}{simple}) and \cref{CSP}. 

The projective simplicity is known to be true unless $\GG$ is anisotropic of type $A_n$, ${}^{3,6}\!D_4$, or~$E_6$ \cite[pp.~513--515]{PlatonovRapinchukBook}. (Projective simplicity obviously fails if there is a nonarchimedean place~$v$, such that $\GG(\rational_v)$ has a compact factor \cite[pp.~510--511]{PlatonovRapinchukBook}. However, compact nonarchimedean factors cannot arise unless $\GG$ is of type~$A_n$ \cite[Thm.~6.5, p.~285]{PlatonovRapinchukBook}. In any case, we have ruled out compact factors by requiring the simple factors of $\GG(\rational_v)$ to have rank at least~$2$.) When there are no compact factors, projective simplicity is also known to be true for inner forms of type ${}^1\!A_n$ \cite[p.~180]{Rapinchuk-CSP}.

For the Congruence Subgroup Property, it suffices to assume that every prime number~$q$ is contained in a finite set~$S$ of prime numbers, such that the congruence kernel $C(S, \cover\GG)$ is central. (This condition is known to be true unless $\GG$ is anisotropic of type $A_n$, $D_4$, or~$E_6$ \cite[Thms.~9.23 and 9.24, pp.~568--569]{PlatonovRapinchukBook}. In fact, for our purposes, it would suffice to know that $C(S, \cover\GG)$ is abelian.) To see that this assumption suffices, note that, for any finite set~$S$ of prime numbers, Strong Approximation \cite[Thm.~7.12, p.~427]{PlatonovRapinchukBook} tells us $\widehat\Gamma\!_S/ C(S,\cover\GG) \iso  \bigtimes_{p \notin S} \cover\GG(\integer_p)$. In particular, $\widehat\Gamma\!_{\emptyset}/ C(\emptyset,\cover\GG) \iso  \bigtimes_{p} \cover\GG(\integer_p)$, so, for each prime~$q$, we may let $\widehat\GG(\integer_q)$ be the inverse image of $\cover\GG(\integer_q)$ in~$\widehat\Gamma\!_{\emptyset}$. The homomorphism $\widehat\Gamma\!_{\emptyset} \to \widehat\Gamma\!_S$ must map $\widehat\GG(\integer_q)$ into $C(S,\cover\GG)$ for all $q \in S$. If $C(S,\cover\GG)$ is abelian, this implies that the image of the commutator subgroup $[\widehat\GG(\integer_q), \widehat\GG(\integer_q)]$ is trivial. Since this is true for all~$q$, we conclude that $[\Gamma\!_{\emptyset}, \Gamma\!_{\emptyset}]$ acts trivially on~$M$. This is sufficient to show that $\cover\GG(\rational)$ acts trivially.
\end{rem}

\begin{ack}
D.\,W.\,M.\ would like to thank the mathematics departments of Indiana University and the University of Chicago for their excellent hospitality while this paper was being written, and would especially like to thank David Fisher for very helpful conversations about this material. 
\end{ack}


\begin{thebibliography}{99}

\bibitem{FarbMargalit-MappingClassGrps}
B.~Farb and D.~Margalit,
\emph{A Primer on Mapping Class Groups,}
(Princeton, 2012).
MR2850125,
ISBN~978-0-691-14794-9.

\bibitem{Fisher-AroundZimmer}
D.~Fisher,
Groups acting on manifolds: around the Zimmer program,
in \emph{Geometry, Rigidity, and Group Actions},
eds. B.~Farb and D.~Fisher,
(Univ. Chicago Press, 2011), pp.~72--157.
MR2807830, ISBN~978-0-226-23788-6.

\bibitem{Gille-KneserTits}
P.~Gille,
Le probl\`eme de Kneser-Tits,
\emph{Ast\'erisque} \textbf{326} (2009) 
39--81. 
MR2605318. 

\bibitem{HofmannMorris-CpctGrps}
K.~H.~Hofmann and S.~A.~Morris,
\emph{The Structure of Compact Groups, 2nd ed.,}
(de Gruyter, 2006).
MR2261490, 
ISBN~978-3-11-019006-9.

\bibitem{Humphreys-AlgicGrps}
J.~E.~Humphreys,
\emph{Linear Algebraic Groups,}
(Springer, 1975).
MR0396773, 
ISBN~0-387-90108-6.

\bibitem{MannSu-ActionsOfPGrps}
L.~N.~Mann and J.~C.~Su,
Actions of elementary $p$-groups on manifolds,
\emph{Trans. Amer. Math. Soc.} \textbf{106} (1963) 115--126.
MR0143840. 

\bibitem{MargulisBook}
G.~A.~Margulis,
\emph{Discrete Subgroups of Semisimple Lie Groups,}
(Springer, 1991).
MR1090825, 
ISBN~3-540-12179-X.

\bibitem{PlatonovRapinchukBook}
V.~Platonov and A.~Rapinchuk,
\emph{Algebraic Groups and Number Theory,}
(Academic, 1994).
MR1278263, 
ISBN~0-12-558180-7.

\bibitem{PrasadRaghunathan-CSPMetaplectic}
G.~Prasad and M.~S.~Raghunathan,
On the congruence subgroup problem: determination of the ``metaplectic kernel'',
\emph{Invent. Math.} \textbf{71} (1983) 21--42.
MR0688260. 

\bibitem{Raghunathan-CSP}
M.~S.~Raghunathan,
On the congruence subgroup problem,
\emph{Inst. Hautes \'Etudes Sci. Publ. Math.} \textbf{46} (1976) 107--161.
MR0507030. 

\bibitem{Raghunathan-CSP2}
M.~S.~Raghunathan,
On the congruence subgroup problem~II,
\emph{Invent. Math.} \textbf{85} (1986) 73--117. 
MR0842049. 

\bibitem{Rapinchuk-CSP}
A.~S.~Rapinchuk,
The congruence subgroup problem,
in \emph{Algebra, K-theory, Groups, and Education (New York, 1997)}, 
eds. T.~Y.~Lam and A.~R.~Magid,
(Amer. Math. Soc., 
1999), pp.~175--188.
MR1732047, ISBN~0-8218-1087-1.

\bibitem{Zimmer-SpectrumEntropy}
R.~J.~Zimmer,
Spectrum, entropy, and geometric structures for smooth actions of Kazhdan groups,
\emph{Israel J. Math.} \textbf{75} (1991) 65--80.
MR1147291. 

\end{thebibliography}
\end{document}